\newtheorem{theorem}{Theorem}[section]
\newtheorem{lemma}[theorem]{Lemma}
\newtheorem{proposition}[theorem]{Proposition}
\theoremstyle{definition}
\newtheorem{remark}[theorem]{Remark}
\numberwithin{equation}{section}
\numberwithin{figure}{section}
\numberwithin{table}{section}
\newcommand\R {{\mathbb R}}
\newcommand\quat {{\mathbb H}}
\newcommand\Aut{{\rm Aut}}
\newcommand\cay {{\omega^{\phantom{I}}_{\rm Ca}}}
\newcommand\SO {{\rm SO}}
\newcommand\Spin {{\rm Spin}}
\newcommand\pitwo {\pi_2}
\newcommand\pifour {\pi_4}
\title{Cayley form, comass, and triality isomorphisms}
\author[M.~Katz]{Mikhail G. Katz$^{*}$}
\address{Department of Mathematics, Bar Ilan University, Ramat Gan
52900 Israel}
\thanks{$^{*}$Supported by the Israel Science Foundation (grants
no.~84/03 and 1294/06) and the BSF (grant 2006393)}
\author[S.~Shnider]{Steve Shnider}
\email{\{katzmik, shnider\}@macs.biu.ac.il}
\begin{document}

\subjclass[2000]{Primary
53C23;    
Secondary:
17B25
}

\keywords{Cayley form, comass, triality, Wirtinger ratio}

\date{\today}

\begin{abstract}
Following an idea of Dadok, Harvey and Morgan, we apply the triality
property of~$\Spin(8)$ to calculate the comass of self-dual~$4$-forms
on~$\R^8$. In particular, we prove that the Cayley~form has comass~$1$
and that any self-dual~$4$-form realizing the maximal Wirtinger ratio
(equation \eqref{11}) is~$\SO(8)$-conjugate to the Cayley~form. We
also use triality to prove that the stabilizer in~$\SO(8)$ of the
Cayley form is~$\Spin(7)$. The results have applications in systolic
geometry, calibrated geometry, and~$\Spin(7)$ manifolds.
\end{abstract}

\maketitle

\tableofcontents

\section{Introduction}

The Cayley~form, denoted~$\cay$, is a self-dual exterior~$4$-form
on~$\R^8$.  The form~$\cay$ was first defined by R. Harvey and
B. Lawson \cite{HL}, by identifying~$\R^8$ with the Cayley numbers
(octonion algebra) and using well-known constructions of triple and
quadruple vector cross products, see \cite{BG, Cu,Kle}.  We observe
that~$\cay \in \Lambda^4 \R^8$ can be characterized in terms of an
extremal property for the ratio of two norms, the comass norm and the
Euclidean norm on~$\Lambda^4 \R^8$.  Namely,~$\cay$ corresponds to a
point of maximal Euclidean norm in the unit ball of the comass norm
(see Section~\ref{two}).

In systolic geometry \cite{Gr1, Gr2, Gr3, Gr4, SGT}, the Cayley form
plays a key role in the calculation of the optimal stable
middle-dimensional systolic ratio of~$8$-manifolds, and in particular
of the quaternionic projective plane, see~\cite{e7}.  For additional
background on systolic geometry, see \cite{Ka95, BK2, KL, BCIK2, Bru,
DKR}.

The Cayley~form defines an important case in the theory of calibrated
geometries of Harvey and Lawson \cite{HL}.  They remark that ``the
most fascinating and complex geometry discussed here is the geometry
of Cayley~$4$-folds in~${\mathbb R}^8\cong {\mathbb O}$". The
Cayley~form is the calibrating form defining the Cayley~$4$-folds. In
general, a~$k$-form on a Riemannian manifold is called ``calibrating"
if it is closed and has pointwise comass~$1$.

The comass~$\|\omega\|$ of a~$k$-form~$\omega$ on a normed vector
space (such as the tangent space at a point on a Riemannian manifold)
is defined as the maximum of the pairing with
decomposable~$k$-forms~$v_1 \wedge \cdots \wedge v_k$ of norm~$1$:
\begin{equation}
\label{11d}
\| \omega \| = \sup \left\{ \omega(v_1,\ldots,v_k) \left| \; \forall
i, |v_i^{\phantom{I}} |=1 \right. \right\}.
\end{equation}
If~$\phi$ is a calibrating~$k$-form on~${\mathbb R}^n$ with
metric~$g$, a~$k$-dimensional subspace~$\xi$ is said to be calibrated
by~$\phi$ if~$\phi|_\xi= {\rm vol}_{(g|\xi)}$.  A submanifold is said
to be calibrated by a closed calibrating form~$\phi$ if all of its
tangent spaces are calibrated by~$\phi$.  It follows immediately from
the definition and Stokes theorem that a calibrated manifold minimizes
volume within its homology class.

Research on calibrated geometries stimulated by \cite{HL} led to many
new examples of spaces with exceptional holonomy.  For example, the
Cayley~form is the basic building block in the structure of
$8$-manifolds with exceptional~$\Spin(7)$ holonomy, see
\cite{Jo00}. Major contributions in calibrated geometry and
exceptional holonomy have been made by M.~Berger, R. L. Bryant,
D. Joyce, J. Dadok, F. R. Harvey, B.~Lawson, F. Morgan and S. Salamon,
\cite{Ber, Br87, BrHa89, BrSal89, DHM, M88, Sal89, Ha90, Jo96, Jo00,
Jo07}.  Riemannian manifolds with~$G_2$ and~$\Spin(7)$ holonomy, of
dimensions~$7$ and~$8$ respectively, are Ricci flat \cite{Bo}.  The
wealth of new examples of~$\Spin(7)$ and~$G_2$ manifolds constructed
by R.L.~Bryant, D. Joyce, S. Salamon have been used as vacua for
string theories, \cite{Ac, Be, Le, Sha}.  The Cayley~$4$-cycles
on~$\Spin(7)$ manifolds are candidates for the supersymmetric
representatives of fundamental particles \cite{Be}.

A number of authors have calculated the comass~$\| \cay \|$ of the
Cayley~form~$\cay$.  Harvey and Lawson \cite{HL} used a definition of
the Cayley~form in terms of vector cross products of Cayley
numbers. The basic identities they used are derived in a 7 page
appendix. J.~Dadok, R.~Harvey, and F. Morgan \cite{DHM} studied the
self-dual calibrations on~${\mathbb R}^8$ using triality, but their
approach depends on a description of the geometry of polar
representations \cite{Da}.

In this paper, we give an explicit description (for certain weight spaces) 
of the intertwining operator between the triality related representations 
on traceless symmetric~$8\times 8$ matrices(see below) and on self-dual~$4$-forms
on~${\mathbb R}^8$. This allows us to use the representation
of~$\SO(8)$ on traceless symmetric matrices to calculate the comass
and describe the self-dual calibrations without appealing to the
structure theorem for polar representations.

In addition to its relevance for calibrated geometry and special
holonomy, the Cayley form is important for its applications in
systolic geometry.  To help understand the applications, we first
recall the familiar case of~$2$-forms, which is to a certain (but
limited) extent a model for what happens for~$4$-forms.

The space of alternating~$2$-forms on~$\R^{n}$, identified with
antisymmetric matrices on~$\R^n$, becomes a Lie algebra with respect
to the standard bracket~$[A,B]=AB-BA$.  An alternating~$2$-form
$\alpha$ can be decomposed as a sum
\begin{equation}
\label{11c}
\alpha= \sum_i c_i \alpha_i,
\end{equation}
where the summands~$\alpha_i$ are orthonormal, simple and commute
pairwise, i.e. belong to a Cartan subalgebra, see Remark~\ref{12a},
item 2.  Moreover, the summands can be chosen in such a way that the
comass norm~$\|\;\|$ as defined in \eqref{11d}, satisfies
\begin{equation}
\label{11b}
\|\alpha\|= \max_i(|c_i|).
\end{equation}
The standard Euclidean norm on~$\R^n$ extends to a Euclidean
norm~$|\;|$ on all the exterior powers, and we have
\begin{equation}
\label{12}
\frac{|\alpha|^2}{\|\alpha\|^2} \leq {\rm rank},
\end{equation}
where ``rank'' is the dimension of the Cartan subalgebra.  This
optimal bound is attained by the standard symplectic form when~$c_i=1$
for all~$i$.

It turns out that bounds similar to \eqref{12} remain valid
for~$4$-forms on~${\mathbb R}^8$, which are also part of a Lie algebra
structure, defined below, but somewhat surprisingly,
formula~\eqref{11b} is no longer true.  See the counterexample in
Section~\ref{counterx}.  We will prove the following theorem, the
first part of which was proved by different methods in \cite{e7}.

\begin{theorem}
\label{thm}
The Cayley form~$\cay$ has comass~$1$ and satisfies the following
relation:
\begin{equation}
\label{11}
\frac{|\cay^2|}{\|\cay\|^2}=14,
\end{equation}
where the value~$14$ is the maximal possible value for~$4$-forms
on~$\R^8$.
\end{theorem}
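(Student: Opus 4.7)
The plan is to use the triality intertwining operator $T \colon \Lambda^4_+ \R^8 \to \mathrm{Sym}^2_0 \R^8$ constructed earlier in the paper to translate the comass problem into linear algebra on traceless symmetric $8 \times 8$ matrices. Since $T$ is an $\SO(8)$-equivariant isomorphism between two copies of the same irreducible $35$-dimensional representation of $\Spin(8)$, the Euclidean norm on $\Lambda^4_+ \R^8$ and the Frobenius norm on $\mathrm{Sym}^2_0 \R^8$ agree up to a fixed positive scalar by Schur's lemma; writing $A = T(\omega)$, the quantity $|\omega|^2$ becomes a fixed multiple of $\mathrm{tr}(A^2)$. (For self-dual $\omega$ on $\R^8$, $|\omega^2|$ in the statement coincides with $|\omega|^2$, since $\omega \wedge \omega = |\omega|^2\,\mathrm{vol}$.)

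Next I would translate the comass itself. Because $\omega$ is self-dual, $\|\omega\| = \sup_\xi \langle \omega, \pi_+(v_\xi)\rangle$, where $v_\xi$ is the volume form of an oriented $4$-plane $\xi$ and $\pi_+(v_\xi) = \tfrac{1}{2}(v_\xi + \ast v_\xi)$. These test elements form a single $\SO(8)$-orbit of dimension $16$ in $\Lambda^4_+ \R^8$, which $T$ carries to an $\SO(8)$-orbit of the same dimension in $\mathrm{Sym}^2_0 \R^8$. Since $\SO(8)$-orbits on $\mathrm{Sym}^2_0 \R^8$ are parameterized by eigenvalue multiplicities, the only spectrum giving a $16$-dimensional orbit is $(a, a, a, a, -a, -a, -a, -a)$, i.e., matrices of the form $a(2P - I)$ for a rank-$4$ orthogonal projector $P$, with $a$ pinned down by matching Frobenius norms. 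Then $\langle A, a(2P - I)\rangle_F = 2a\,\mathrm{tr}(AP)$ (using $\mathrm{tr}(A) = 0$), so the supremum over $P$ identifies $\|\omega\|$, up to a universal constant, with $\sigma_4(A) := \lambda_1(A) + \lambda_2(A) + \lambda_3(A) + \lambda_4(A)$, the sum of the top four eigenvalues of $A$.

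The theorem thus reduces to an elementary optimization: maximize $\sum_i \lambda_i^2 / (\lambda_1 + \lambda_2 + \lambda_3 + \lambda_4)^2$ over real $8$-tuples $\lambda_1 \geq \cdots \geq \lambda_8$ with $\sum \lambda_i = 0$. The ordering constraint $\lambda_4 \geq \lambda_5$ is what rules out the apparently dangerous behaviour of simultaneously pushing $\lambda_1$ up and $\lambda_8$ down, and a routine Lagrange-multiplier calculation pins the maximum at the profile $(7, -1, -1, -1, -1, -1, -1, -1)$ up to scaling, which, after folding in the universal constant, gives the value $14$. Finally, $\cay$ is identified with (the $T$-preimage of) such an extremal rank-one-deviation matrix by invoking the stabilizer: triality sends $\Spin(7) \subset \SO(8)$ fixing $\cay$ to the standard $\SO(7) \subset \SO(8)$ fixing a vector $u \in \R^8$, and the only traceless symmetric matrices fixed by this $\SO(7)$ are scalar multiples of $uu^T - \tfrac{1}{8} I$, which are exactly the extremizers found above. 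I expect the main obstacle to be the bookkeeping in Step 2: while $\SO(8)$-equivariance essentially forces the eigenvalue profile $(a, a, a, a, -a, -a, -a, -a)$ for the image of the simple-$4$-vector orbit, pinning down the precise conformal constants relating the two norms requires chasing the triality intertwiner $T$ through at least one concrete test case.
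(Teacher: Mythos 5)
Your overall strategy is the same as the paper's---push everything through the triality intertwiner into traceless symmetric $8\times 8$ matrices---but you execute the two key steps differently, and both of your substitutions are sound. For the comass, the paper computes $\sup_g(\cay, g\,e^{1234})$ explicitly by identifying $e^{1234}$ with $\tfrac12\psi(z_1)$, $z_1={\rm diag}(I_4,-I_4)$, and evaluating $\sup_{g'}(A_1,g'z_1g'^{-1})$ by hand; your orbit-dimension argument (the partition $4+4$ is the unique eigenvalue-multiplicity type whose $\SO(8)$-orbit in the traceless symmetric matrices is $16$-dimensional, so the self-dual parts of simple unit $4$-vectors must map to $a(2P-I)$ with $P$ a rank-$4$ projector) reaches the same conclusion $\|\omega\|=\tfrac12(\lambda_1+\lambda_2+\lambda_3+\lambda_4)$ more conceptually. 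For the maximality of $14$, the paper characterizes the unit-comass self-dual forms satisfying \eqref{max} as convex combinations of the eight forms $\omega_j,\eta_j$ (via the analogue of \cite[Lemma 3.4]{DHM}) and then uses $\sum a_i^2\le\sum a_i$; your direct maximization of $\sum_i\lambda_i^2/(\lambda_1+\lambda_2+\lambda_3+\lambda_4)^2$ over ordered traceless spectra is an equivalent but self-contained convexity argument (the constraint set is a compact polytope whose extreme points are the two-level spectra, and the maximum $7/2$---i.e.\ $14$ after the factor of $4$ from the comass normalization---is attained exactly at the profiles $(7,-1,\dots,-1)$ and $(1,\dots,1,-7)$; note the extremizing spectrum is therefore not unique up to scaling, only the unordered pair of profiles is).

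The one step you should not leave as stated is the identification of $T(\cay)$ with a multiple of $uu^T-\tfrac18 I$ by ``invoking the stabilizer'': in the paper that implication runs the other way, since Theorem \ref{stabilizer} is \emph{deduced} from the computation $\psi(A_1)=\tfrac14\cay$. Using it here is circular unless you independently establish both that the octonionic $\Spin(7)$ stabilizes $\cay$ and that this particular copy of $\Spin(7)$ is the triality image of the vector-stabilizer copy (there are three triality-related conjugacy classes of $\Spin(7)$ in $\Spin(8)$, and only one projects to a subgroup of $\SO(8)$ fixing a vector of $V$). The clean fix is the fallback you already flag as necessary for the conformal constants: evaluate the intertwiner on one concrete element, which is exactly the paper's weight-space computation yielding $\psi(A_1)=\tfrac14\cay$ in \eqref{5}; once that is in hand, $|4A_1|^2=14$ and $\lambda_1+\dots+\lambda_4=2$ for $4A_1$ give both halves of the theorem at once.
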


The approach using triality also leads to simple proofs of the
following theorems.

\begin{theorem}
\label{thm12} Any self-dual~$4$-form on~$\R^8$ satisfying
\eqref{11} is~$\SO(8)$-conjugate to the Cayley form.
\end{theorem}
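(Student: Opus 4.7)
The approach is to transfer the statement from self-dual~$4$-forms to traceless symmetric~$8\times 8$ matrices by means of the triality intertwining operator constructed earlier in the paper. Under this~$\SO(8)$-equivariant isomorphism (with the action on one side twisted by the outer automorphism of~$\Spin(8)$), the Euclidean square-norm~$|\omega|^2$ pulls back to a fixed scalar multiple of~$\operatorname{tr}(S^2)$, while the comass~$\|\omega\|$, being~$\SO(8)$-invariant under conjugation of the corresponding matrix~$S$, pulls back to a symmetric function of the eigenvalues of~$S$. Consequently the ratio in~\eqref{11} becomes a purely spectral invariant of~$S$.

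The next step is to revisit the proof of Theorem~\ref{thm}, now tracking the equality case. Since the ratio depends only on the eigenvalues of~$S$, equality in~\eqref{11} translates into a constraint on the spectrum of~$S$ alone. An elementary optimization on the simplex of traceless spectra---either by Lagrange multipliers or by the convexity structure already used to prove the bound~$14$---should pin down the extremal spectrum uniquely up to sign and scale. In particular, all extremal matrices share a single multi-set of eigenvalues.

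Conjugation orbits of real symmetric matrices under~$\SO(8)$ are determined by their eigenvalue multi-sets, since a sign change of a single eigenvector (lying in~$O(8)\smallsetminus\SO(8)$) can be absorbed by a simultaneous sign change of a second eigenvector inside~$\SO(8)$. Hence the extremal matrices form a single~$\SO(8)$-conjugation orbit. Translating back via the triality intertwining, the self-dual~$4$-forms achieving equality in~\eqref{11} form a single~$\SO(8)$-orbit in~$\Lambda^4 \R^8$. Since~$\cay$ attains the ratio~$14$ by Theorem~\ref{thm}, it lies in that orbit, and every self-dual~$4$-form realizing~\eqref{11} is therefore~$\SO(8)$-conjugate to~$\cay$. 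The main technical hurdle is ensuring that the comass pulls back to a genuine spectral function on the matrix side, and confirming strict inequality in~\eqref{11} for every non-extremal spectrum.
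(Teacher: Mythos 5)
Your overall strategy---transferring the problem through the triality intertwiner $\psi$ to traceless symmetric matrices and observing that both the Euclidean norm and the comass become conjugation-invariant, hence spectral, functions of $S$---is sound and is in fact the strategy underlying the paper. But two essential steps are missing. First, you never produce the spectral formula for the comass, and without it the ``elementary optimization'' has no objective function. What is needed is the explicit identity $\|\psi(S)\|=\tfrac12(s_1+s_2+s_3+s_4)$, where $s_1\ge\dots\ge s_8$ are the eigenvalues of $S$; this follows from the transitivity of $\SO(8)$ on decomposable unit $4$-vectors, the identification of $e_1\wedge e_2\wedge e_3\wedge e_4$ with $\tfrac14\psi(z_1)$ up to its anti-self-dual part, and the trace maximization $\sup_g\operatorname{tr}(Sgz_1g^{-1})=2(s_1+s_2+s_3+s_4)$. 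That computation is exactly the content of Proposition~\ref{comassone} and of the convex-combination discussion following Proposition~\ref{morecomassone}, so it cannot be waved off as a ``technical hurdle.''

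Second, and more seriously, the optimization does \emph{not} pin down the extremal spectrum uniquely, and your own caveat ``up to sign'' is precisely where the argument breaks. Normalizing the comass to $1$, i.e.\ imposing $s_1+s_2+s_3+s_4=2$ and $s_5+\dots+s_8=-2$, the convex function $\operatorname{tr}(S^2)$ attains its maximum $14$ on the resulting compact polytope at exactly two vertices: the spectrum $\{\tfrac72,-\tfrac12,\dots,-\tfrac12\}$ of $4A_1$ and its negative. These are distinct multisets, so the corresponding matrices are \emph{not} $\SO(8)$-conjugate (they even have opposite values of $\operatorname{tr}(S^3)$), and your assertion that ``all extremal matrices share a single multi-set of eigenvalues,'' hence form a single conjugation orbit, fails. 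What the spectral argument alone delivers is that any form satisfying \eqref{11} is conjugate either to $\cay$ or to $-\cay$, and identifying these two cases cannot be done on the matrix side, where the two spectra are genuinely inequivalent. The paper confronts the same fork differently: it writes a normalized comass-one form as a convex combination $\sum a_i$ of the eight extremal forms $\omega_j,\eta_j$, obtains $|\omega|^2=14\sum a_i^2\le 14$ with equality only when a single $a_i$ equals $1$, and then invokes Proposition~\ref{morecomassone} for the conjugacy of all eight forms to $\cay$. You must either supply an explicit element of $\SO(8)$ carrying $\cay$ to $-\cay$ or route the equality case through such a separate conjugacy statement; as written, the proposal does not prove the theorem.
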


\begin{theorem}\label{stabilizer}
The subgroup of~$\SO(8)$ stabilizing the Cayley form is isomorphic
to~$Spin(7)$.
\end{theorem}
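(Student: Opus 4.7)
The plan is to transfer the problem via triality to the stabilizer of a rank-one projection matrix, where the answer is essentially manifest. The intertwining operator constructed in the earlier sections identifies the $\Spin(8)$-representation on $\Lambda^{4}_{+}(\R^{8})$ (acting through the vector representation $\Spin(8)\to\SO(V)=\SO(8)$) with the $\Spin(8)$-representation on traceless symmetric $8\times 8$ matrices (acting through a half-spin representation $\Spin(8)\to\SO(S^{+})\cong\SO(8)$ by conjugation $M\mapsto gMg^{T}$); these two $35$-dimensional irreducibles are exchanged by the outer triality automorphism $\tau$ of $\Spin(8)$.

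Using this intertwiner together with Theorem~\ref{thm12}, I would first identify the image of $\cay$ as a matrix of the form $M_{0}=\psi_{0}\psi_{0}^{T}-\tfrac{1}{8}I$ for some unit vector $\psi_{0}\in S^{+}$, since such rank-one traceless projections form the unique closed $\SO(8)$-orbit in the projectivization of the symmetric-matrix representation, which corresponds under triality to the extremal-orbit uniqueness of Theorem~\ref{thm12}. The stabilizer $\mathrm{Stab}_{\SO(S^{+})}(M_{0})$ is then computed directly: an element $h$ fixes $M_{0}$ under conjugation iff it preserves the eigenspace splitting $\R\psi_{0}\oplus\psi_{0}^{\perp}$, iff $h\psi_{0}=\pm\psi_{0}$; this produces a two-component subgroup of $\SO(8)$ whose identity component is $\SO(\psi_{0}^{\perp})\cong\SO(7)$.

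Finally I would pull this subgroup back through $\tau^{-1}$ to obtain $\mathrm{Stab}_{\Spin(8)}(\cay)$ (now acting via the vector representation), then project to $\SO(V)=\Spin(8)/\{1,z_{V}\}$, where $z_{V}$ is the kernel of the vector representation. The two-component ambiguity collapses because triality permutes the three non-trivial central elements of $\Spin(8)$, sending $z_{V}$ to the half-spin-representation kernel, which acts as $-1$ on $S^{+}$; thus $z_{V}$ sits in the non-identity component of $\mathrm{Stab}(M_{0})$, and quotienting by $\{1,z_{V}\}$ identifies the two components. The resulting connected $21$-dimensional subgroup of $\SO(8)$ is exactly the image of $\Spin(7)$ under its $8$-dimensional spin representation, and so is isomorphic to $\Spin(7)$.

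The main obstacle is the triality bookkeeping in the last step: one must verify from the explicit description of the intertwiner (from the earlier sections) that the vector-representation kernel $z_{V}$ indeed corresponds under $\tau$ to the half-spin-representation kernel, so that it acts as $-1$ on $\psi_{0}$ and hence merges the two components of $\mathrm{Stab}(M_{0})$ upon descending to $\SO(V)$. Once that is in hand, the identification $\mathrm{Stab}_{\SO(V)}(\cay)\cong\Spin(7)$ follows by a clean dimension-and-connectedness argument.
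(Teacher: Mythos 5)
Your proposal is correct and follows essentially the same route as the paper: both transfer $\cay$ via the triality intertwiner to the rank-one traceless projection $A_1=D_1-\tfrac18 I$, identify its conjugation stabilizer as the two-component group $\{\pm I_8\}\times\SO(7)$, and then use the fact that triality permutes the non-trivial central elements $\{-1,\gamma,-\gamma\}$ of $\Spin(8)$ to conclude that the two components merge in the quotient and that the resulting subgroup of $\SO(8)$ is the connected double cover $\Spin(7)$ of $\SO(7)$. The only slip is one of labeling: the half-spin kernel $\{1,\gamma\}$ acts as $+1$ on $\Delta_+$ (it is $z_V=-1$ itself that acts as $-I$ there), but this does not affect the argument, which the paper settles by checking $\phi(\ker\rho_1)=\ker\rho_4=\{1,\gamma\}$ and $\{1,\gamma\}\cap\Spin(7)=\{1\}$.
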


\begin{remark}
\label{12a}
\hfil
\begin{enumerate}

\item
In Section~\ref{counterx} we give an example to show that a linear
combination of the seven forms with all coefficients equal
to~$+1$, has comass~$2$, which shows that the situation for
self-dual~$4$-forms on~$\R^8$ is not completely parallel to the
case of~$2$-forms, see equation \eqref{11b}.

\item
 In the course of the proof of  Theorem~\ref{thm12}, we prove that every
self-dual~$4$-form on~$\R^8$ is~$\SO(8)$-conjugate to a linear
combination of the following~$7$ mutually orthogonal self-dual
forms:
\begin{equation}\label{7forms}
\{e^{1234}, e^{1256}, e^{1278}, e^{1357}, e^{1467}, e^{1368},
e^{1458} \},
\end{equation}
where~$e^{jklm}:=e_j\wedge e_k\wedge e_l\wedge e_m + e_p\wedge
e_q\wedge e_r\wedge e_s$ where the second summand is the Hodge
dual of the first. The comment following \eqref{11c} concerning a
Cartan subalgebra is relevant here, because the~$7$ forms listed
in \eqref{7forms} in fact form a maximal abelian subalgebra of
real~$E_7$ as defined in \cite[p.~76]{Ad}.  The conjugacy can be
proved using this fact and a standard theorem in Lie theory.

\item Bryant \cite[p. 545]{Br87} observed that~$|\cay|^2=14$, but
did not notice that this gave the maximal value for the norm of a
calibrating~$4$-form. \end{enumerate}
\end{remark}

One possible application is exploiting the~$\R^8$ estimates described
here so as to calculate the optimal stable middle-dimensional systolic
ratio of~$8$-manifolds.  Such an application depends on the existence
of a Joyce manifold with middle-dimensional Betti number~$b_4=1$.
Currently, it is unknown whether such manifolds exist.

\section{The Cayley form}
\label{two}

The Cayley form can be defined by two different coordinate-dependent
constructions.  There is also a coordinate-independent
characterization of its~$\SO(8)$ orbit.

\begin{proposition}
We have the following three equivalent ways of describing the Cayley
form~$\cay$:
\begin{enumerate}
\item The~$\SO(8)$ orbit of~$\cay$ consists of the set of points
of the unit comass ball in~$\Lambda^4(\R^8)$ of maximal Euclidean
norm. \item Under the identification of~$\R^8$ with~${\mathbb
C}^4$, the Cayley form $\cay$ can be expressed as the sum of two
terms, half the square of a standard Kahler form and the real part
of a holomorphic volume form:
\begin{equation}
\label{intrinsic}
\cay= \frac 12 \omega_J^2 + {\rm Re}(\Omega_J).
\end{equation}
\item
Under the identification of~$\R^8$ with ~$\quat \oplus \quat$, and
quaternionic ``vector space" structure given by right multiplication,
the Cayley form is~$\SO(8)$-equivalent to the alternating sum of half
the squares of the three K\"ahler forms associated with the complex
structures given by right multiplication by~$i,j,k$ respectively, see
\cite[Lemma 2.21]{BrHa89}.  If these forms are denoted~$\omega_{J_a}$,
$a=1,2,3$, then~$\cay$ is~$SO(8)$ conjugate to
\begin{equation}
\label{12b}
\eta_2=-\frac12 \omega_{J_1}^2+\frac12 \omega_{J_3}^2-\frac12
\omega_{J_2}^2.
\end{equation}
\end{enumerate}
\end{proposition}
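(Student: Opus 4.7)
The plan is to show that each of the three descriptions picks out the same $\SO(8)$ orbit in $\Lambda^4(\R^8)$, namely that of the Harvey--Lawson Cayley form.

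For the equivalence of (2) and (3), I would identify $\quat \oplus \quat$ with $\mathbb{C}^4$ via the decomposition $\quat = \mathbb{C} \oplus \mathbb{C}j$ applied to each summand. Choosing a real basis $(e_1,\ldots,e_8)$ adapted to this identification, the relations $\omega_{J_a}(X,Y) = \langle X, J_a Y\rangle$ and $\omega_J(X,Y) = \langle X, JY\rangle$ allow me to expand each squared K\"ahler form, as well as the real part of the holomorphic volume $\Omega_J$ on $\mathbb{C}^4$, as an explicit sum of basic 4-wedges $e^{pqrs}$. A bookkeeping of signs then shows that both the alternating combination $\eta_2$ of \eqref{12b} and the expression $\tfrac12 \omega_J^2 + \mathrm{Re}(\Omega_J)$ of \eqref{intrinsic} reduce to the sum of the seven self-dual wedges listed in \eqref{7forms} with unit coefficients, up to a permutation of basis vectors lying in $\SO(8)$. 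This all-ones combination is the classical expression for the Harvey--Lawson Cayley form \cite{HL}; thus parts (2) and (3) both produce forms in the $\SO(8)$ orbit of $\cay$.

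For (1), I would appeal to Theorems~\ref{thm} and~\ref{thm12}. Theorem~\ref{thm} yields $\|\cay\|=1$, $|\cay|^2=14$, and the maximality of $14$ for $|\omega|^2/\|\omega\|^2$ on $\Lambda^4(\R^8)$; hence $\cay$, and by $\SO(8)$-invariance of both norms its entire orbit, lies in the extremal set inside the unit comass ball. For the converse, given $\alpha$ with $\|\alpha\|=1$ and $|\alpha|^2=14$, I would orthogonally decompose $\alpha = \alpha^+ + \alpha^-$ into self-dual and anti-self-dual parts and use the Hodge-star identity $\alpha^\pm(\xi) = \tfrac12[\alpha(\xi)\pm\alpha(\ast\xi)]$, valid for any unit decomposable 4-vector $\xi$, to deduce $\|\alpha^\pm\| \le \|\alpha\|$. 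Combining this bound with the Wirtinger estimate from Theorem~\ref{thm} applied separately to $\alpha^\pm$ forces one of the parts to vanish, so $\alpha$ is self-dual (after fixing the orientation convention for $\cay$); Theorem~\ref{thm12} then supplies the desired $\SO(8)$-conjugacy to $\cay$.

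The main obstacle is the final rigidity step in the reverse direction of (1), namely ruling out nontrivial self-dual / anti-self-dual mixtures among maximizers. The quantitative estimate combining $\|\alpha^\pm\| \le \|\alpha\|$ with the sharp Wirtinger bound of Theorem~\ref{thm} is the central tool; the remaining computational content of the proposition lies in the coordinate expansions verifying that (2) and (3) both reduce to \eqref{7forms}, which is straightforward but tedious.
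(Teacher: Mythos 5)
There is a genuine error in your treatment of the equivalence of (2) and (3). You claim that both $\tfrac12\omega_J^2+{\rm Re}(\Omega_J)$ and $\eta_2$ ``reduce to the sum of the seven self-dual wedges listed in \eqref{7forms} with unit coefficients, up to a permutation of basis vectors lying in $\SO(8)$.'' This is false, and it is false in a way that the paper is specifically at pains to emphasize. The correct expansion of $\cay$ is the \emph{signed} sum \eqref{4b}, with three coefficients equal to $-1$, and $\eta_2$ has yet another sign pattern. The all-ones combination $\omega_+$ is treated in Proposition~\ref{comasstwo}: it has comass~$2$, not~$1$, and since comass is an $\SO(8)$-invariant, $\omega_+$ cannot lie in the orbit of $\cay$ (see also Remark~\ref{12a}, item~1). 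So no bookkeeping of signs and no change of orthonormal basis can bring $\cay$ to the all-ones form, and your identification of both expressions with ``the classical Harvey--Lawson form'' via that normal form collapses. The paper instead takes (2) as the \emph{definition} of $\cay$, expands both $\cay$ and $\eta_2$ in coordinates as signed sums of the seven wedges, and obtains their conjugacy not by a basis permutation but through triality: Proposition~\ref{morecomassone} exhibits both as images under the intertwiner $\psi$ of $\SO(8)$-conjugate traceless symmetric matrices $A_1$ and $-A_6$, and Theorem~\ref{thm12} completes the identification. If you want to avoid triality you must at least exhibit an explicit element of $\SO(8)$ (not merely a permutation of the $e_i$) carrying one sign pattern to the other.

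The reduction to the self-dual case in your argument for (1) also does not close. From $\alpha^{\pm}(\xi)=\tfrac12[\alpha(\xi)\pm\alpha(\ast\xi)]$ you correctly get $\|\alpha^{\pm}\|\le\|\alpha\|$, but then
\begin{equation*}
|\alpha|^2=|\alpha^+|^2+|\alpha^-|^2\le 14\bigl(\|\alpha^+\|^2+\|\alpha^-\|^2\bigr)\le 28\,\|\alpha\|^2,
\end{equation*}
which is weaker than the bound $14\,\|\alpha\|^2$ you need, and equality $|\alpha|^2=14\|\alpha\|^2$ is therefore not enough to force one of $\alpha^{\pm}$ to vanish: a hypothetical maximizer with $|\alpha^+|^2=|\alpha^-|^2=7$ is not excluded by your inequalities. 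You would need the sharper fact that $\|\alpha^+\|^2+\|\alpha^-\|^2\le\|\alpha\|^2$ (or some other rigidity input), which does not follow from evaluating $\alpha^{\pm}$ at separate maximizing $4$-planes. The paper sidesteps this by phrasing Theorem~\ref{thm12} for self-dual forms only and deducing item (1) from Theorems~\ref{thm} and~\ref{thm12}; the general (non-self-dual) Wirtinger bound is imported from the reference cited for the first part of Theorem~\ref{thm}.
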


\begin{remark}\label{12c}
The statement of item 1 was suggested to us by Blaine Lawson.
The forms described in items 2 and 3 of the proposition correspond
to two different points for the orbit described in item~1.  Bryant
and Harvey \cite{BrHa89} identify the Cayley form with
the~$\eta_2$ described in item 3. See Proposition~\ref{morecomassone} 
for the notation. The expression on the right side of equation \eqref{12c}
generalizes to~$n$-dimensional quaternionic space for~$n>2$, and
thus to hyper-K\"ahler manifolds. The Cayley form,  denoted by~$\Phi$
in~\cite[p. 120]{HL} and defined using octonions,  is another point in
the same orbit, $\eta_3$ in the notation of Proposition~\ref{morecomassone} below.
The Cayley form is denoted~$\omega_1$ in \cite[p. 14]{DHM},
and~$\Omega$ in~\cite[p.~342]{Jo00}.
\end{remark}

\begin{proof}
The first assertion of the proposition is a consequence of
Theorem~\ref{thm12}.  The proof is given in Section~\ref{five}.

The simplest description of~$\cay$, the one given in item 2, is based
on the standard identification of~$\R^8$ with~${\mathbb C}^4$.

Let~$\{f_j\}$,~$j=1,\ldots 8$, be an orthonormal basis for~${\mathbb
R}^8$ and~$\{e_j\}$ the dual basis. Define a complex structure by
$$
J(f_{2a-1})= f_{2a}, \quad J(f_{2a})=-f_{2a-1}, \quad a=1,2,3,4.
$$
Then
\begin{equation}
\{ e_{2a-1}+ i e_{2a}, \quad a=1,2,3,4\}
\end{equation}
form a basis for the complex linear dual space.  The definition of the
Cayley form, which uses standard constructions from complex differential
geometry, is as follows.  Define the symplectic form
\begin{equation}\label{omega}
\omega_J=\sum_{a=1,\ldots,4} e_{2a-1}\wedge e_{2a}=\frac12 {\rm Im}
\sum_{a=1,\ldots, 4} (e_{2a-1}- ie_{2a})\otimes (e_{2a-1}+ ie_{2a}),
\end{equation}
and the complex~$4$-form
\begin{equation}\label{Omega}\Omega_J=(e_1+i e_2)\wedge(e_3+i e_4)
\wedge(e_5+i e_6)\wedge(e_7+i e_8);
\end{equation}
then we define
$$\cay:= \frac 12 \omega_J^2 + {\rm Re}(\Omega_J).$$

In terms of the dual basis~$\{e_i|i=1,\ldots,8\}$, the form~$\cay$ is
a signed sum of the~$7$ mutually orthogonal self-dual~$4$-forms
\begin{equation*}
\{e^{1234}, e^{1256}, e^{1278}, e^{1357}, e^{1467}, e^{1368}, e^{1458}
\},
\end{equation*}
where
\begin{equation}
\label{selfdual}
e^{jklm}:=e_j\wedge e_k\wedge e_l\wedge e_m + e_p\wedge e_q\wedge
e_r\wedge e_s
\end{equation}
and the second summand is the Hodge star of the first:

\begin{equation}
\label{4b}
\cay:= e^{1234}+e^{1256}+e^{1278}+ e^{1357}-e^{1368}- e^{1458}-
e^{1467} ,
\end{equation}
see also~\eqref{12b}.

On~$\quat\oplus \quat$, there are three K\"ahler forms defined by the
three complex structures given by right multiplication by~$i,j,k$
respectively. They are
\begin{eqnarray*}
\omega_{J_1}&=&e_1\wedge e_2- e_3\wedge e_4+ e_5\wedge e_6- e_7\wedge
e_8,\\ \omega_{J_2}&=&e_1\wedge e_3- e_4\wedge e_2+ e_5\wedge e_7-
e_8\wedge_6, \quad\mbox{\rm and}\\ \omega_{J_3}&=& e_1\wedge e_4-
e_2\wedge e_3+ e_5\wedge e_8- e_6\wedge e_7.
\end{eqnarray*}
A simple calculation shows that
\begin{eqnarray*}
\eta_2&=&e^{1234}-e^{1256}+e^{1278}- e^{1357}-e^{1368}-e^{1467}+
e^{1458}\\ &=&-\frac12 \omega_{J_1}^2-\frac12 \omega_{J_2}^2+\frac12
\omega_{J_3}^2.
\end{eqnarray*}
That~$\eta_2$ is~$SO(8)$ conjugate to~$\cay$ follows from
Proposition~\ref{morecomassone} and Theorem~\ref{thm12}.
\end{proof}

\section{Triality for~$D_4$}
\label{triality}

The Lie group~$\Spin(8,\mathbb R)$ has three~$8$-dimensional
representations.  They are the vector representation,~$V= \R^8$, and
the two spinor representations,~$\Delta_+$ and~$\Delta_-$.  Fix a
maximal torus~$T \subset \Spin(8)$, and a set of simple positive
roots.  Then for any automorphism~$\phi\in {\rm Aut}(\Spin(8))$, the
image~$\phi(T)$ is another maximal torus.  We can compose with a
conjugation~$\sigma_g(x)=gxg^{-1}$ so that~$\sigma_g\circ \phi(T)=T$
and the fundamental chamber is preserved.  In this way, an element of
the outer automorphism group

\renewcommand{\arraystretch}{1.3}
\begin{figure}
\begin{equation*}
\xymatrix@R=14pt@C=20pt{*=0{\bullet} \ar@{-}[rd] & {} & {} \\{} &
  *=0{\phantom{^{\alpha_2}}\bullet^{\alpha_2}_{\phantom{I}}}
  \ar@{-}[r] & *=0{\bullet} \\*=0{\bullet} \ar@{-}[ur] & {} & {} }
\end{equation*}
\caption{\textsf{Dynkin diagram of~$D_4$, see \eqref{21b}}}
\label{21}
\end{figure}
\renewcommand{\arraystretch}{1}

\begin{equation*}
{\rm Out}(\Spin(8))={\rm Aut}(\Spin(8))/{\rm Inn}(\Spin(8))
\end{equation*}
induces an automorphism of the Dynkin diagram~$D_4$ of
Figure~\ref{21}.  This correspondence determines an isomorphism with
the symmetric group on three letters,~${\rm
Out}(\Spin(8))\cong\Sigma_3$, where the group~$\Sigma_3$ permutes the
three edges of the Dynkin diagram, see J.F.~Adams
\cite[pp.~33-36]{Ad}.  We identify~$so(8)$ with~$8\times 8$ skew symmetric
real matrices  and the Cartan subalgebra ~${\mathfrak h} \subset so(8)$
with the block diagonal matrices having four~$2\times 2$ blocks. An
orthogonal basis~$\{t_1,t_2,t_3,t_4\}$ is defined by the condition:
\begin{equation*}
\sum x_i t_i = {\rm diag}(x_1J, \; x_2J, \; x_3J, \; x_4J),
\end{equation*}
where~$J=\begin{pmatrix}0&1\\-1&0\end{pmatrix}$, while~$\{x_i,\,
i=1,2,3,4\}$ are coordinates in~${\mathfrak h}$.

The simple positive roots~$\alpha_i \in {\mathfrak h}^*$ are
\begin{equation}
\label{21b}
\alpha_1=x_1-x_2,\quad\alpha_2=x_2-x_3,\quad\alpha_3=
x_3-x_4,\quad\alpha_4=x_3+x_4,
\end{equation}
where~$\alpha_2$ appears at the center of the diagram of
Figure~\ref{21}.  The fundamental weights~$\lambda_i \in {\mathfrak
h}^*$ are
\begin{equation*}
\begin{aligned}
\lambda_1 &=x_1, \\
\lambda_2 &=x_1+x_2, \\
\lambda_3 &=\frac12(x_1+x_2+x_3-x_4), \\
\lambda_4 & =\frac12(x_1+x_2+x_3+x_4),
\end{aligned}
\end{equation*}
and the corresponding representations are
\begin{equation*}
\rho_1{\rm \ on\ }\Lambda^1(V)=V, \quad\rho_2{\rm \ on\
}\Lambda^2(V),\quad \rho_3{\rm \ on\ }\Delta_-,\quad\rho_4{\rm \ on\
}\Delta_+,
\end{equation*}
respectively.  Let~$=\sigma^2(V)$ be the representation of~$\Spin(8)$ on
the second symmetric power of~$V$, which, by the~$SO(8)$ equivalence
of~$V$ and~$V^*$, is equivalent to the representation by conjugation on the~$8\times 8$ traceless symmetric matrices.  Let~$\sigma^2_0(V)$ be the
subrepresentation on the traceless symmetric matrices, so that one has
a decomposition
\begin{equation*}
\sigma^2(V)\cong{\mathbf 1}\oplus \sigma^2_0(V).
\end{equation*}
The second symmetric power of~$\Delta_+$ decomposes as
\begin{equation*}
\sigma^2(\Delta_+)={\mathbf 1}\oplus \Lambda^4_+(V),
\end{equation*}
where~$\Lambda^4_+(V)$ is the representation of~$\Spin(8)$ on the
self-dual~$4$-forms, see \cite[p.~25, Theorem~4.6]{Ad}.

The representations
\begin{equation*}
\pitwo: \Spin(8)\rightarrow {\rm Aut}(\sigma^2_0(V))
\end{equation*}
to
\begin{equation*}
\pifour: \Spin(8)\rightarrow {\rm Aut}(\Lambda^4_+(V))
\end{equation*}
both factor  through the vector representation,
$$\rho_1:\Spin(8)\rightarrow \SO(8).$$
If~$\hat \pi_2$ and~$\hat\pi_4$
denote the respective~$\SO(8)$ representations
$$\hat\pi_2:\SO(8)\rightarrow \Aut(\sigma^2_0(V))\quad\mbox{\rm and}$$
$$\hat\pi_4:\SO(8)\rightarrow \Aut(\Lambda^4_+(V),\quad\quad$$
then
\begin{equation}\label{factor}\pi_2=\hat\pi_2\circ\rho_1\quad \pi_4=\hat\pi_4\circ\rho_1.
\end{equation}

Let~$\phi$ be the automorphism (preserving the maximal torus and
fundamental chamber) representing the outer automorphism that
interchanges the fundamental weights~$\lambda_1$ and~$\lambda_4$, and
leaves~$\lambda_3$ fixed. Then~$\phi$ transforms the representation~$\pitwo$ to~$\pifour$.
In other words, there is a linear isomorphism~$\psi:\sigma^2_0(V)
\rightarrow \Lambda^4_+(V)$ such that
\begin{equation}\label{0}
\psi(\pitwo(g)w)=\pifour(\phi(g))\psi(w),
\end{equation}
for all~$g\in \Spin(8)$ and~$w\in\sigma^2_0(V)$.
See Figure~\ref{31}.

\begin{figure}
\begin{equation*}
\xymatrix{ \sigma_0^2(V) \ar[rr]^{\pitwo(g)} \ar[d]^{\psi} &&
\sigma_0^2(V) \ar[d]^{\psi}\\ \Lambda^4_+(V) \ar[rr]^{\pifour\circ
\phi (g)} && \Lambda^4_+(V) }
\end{equation*}
\caption{Intertwining of a pair of representations}
\label{31}
\end{figure}

\section{Weight spaces in symmetric matrices and self-dual~$4$-forms}

In this section we describe the map~$\psi$ in terms of the weights
of~$\sigma^2_0(V)$ and~$\Lambda_+^4(V)$. Since we are dealing with
real representations of a compact group, the weight spaces will be
real two dimensional subspaces.

In the complexified representation~$\sigma^2_0(V)$, the vector
$$(e_{2a-1}+ i e_{2a})\otimes (e_{2a-1}+ i e_{2a})\,\,
\mbox{\rm   is a weight vector with weight  } 2i x_a.$$
In the real representation, we call the 2-dimensional real subspace
with basis
$$u_a=e_{2a-1}\otimes e_{2a-1} -e_{2a}\otimes e_{2a},
\quad\mbox{\rm and  }v_a=e_{2a-1}\otimes e_{2a}+e_{2a}\otimes e_{2a-1}$$
a weight space for the weight,~$2x_a$,~$a=1,2,3,4$.

In terms of traceless symmetric~$8\times 8$ matrices
$so(8)$ acting by matrix commutator, the elementary
formulae:
\begin{equation*}
\left[\left(\begin{array}{cc} 0& 1\\-1& 0\end{array}\right), \quad
\left(\begin{array}{cc} 1& 0\\0& -1\end{array}\right)\right]=
-2\left(\begin{array}{cc} 0& 1\\1&0\end{array}\right)
\end{equation*}
and
\begin{equation*}
\left[\left(\begin{array}{cc} 0& 1\\-1& 0\end{array}\right), \quad
\left(\begin{array}{cc} 0& 1\\1& 0\end{array}\right)\right]=
2\left(\begin{array}{cc} 1& 0\\0& -1\end{array}\right),
\end{equation*}
imply
\begin{equation*} [x_1t_1+x_2t_2+x_3t_3+x_4t_4, u_a]=-2x_a \, v_a
\end{equation*}
and
\begin{equation*}
[x_1t_1+x_2t_2+x_3t_3+x_4t_4, v_a]=2x_a \, u_a,
\end{equation*}
where
\begin{equation*}
u=\left(\begin{array}{cc}1&0\\0&- 1
\end{array}\right), \quad
v=\left(\begin{array}{cc}
0& 1\\
1& 0
\end{array}\right)
\end{equation*}
and
\begin{equation}
\label{23}
u_a=\left(\begin{array}{ccc} 0_{2a-2}& 0& 0\\ 0& u & 0\\ 0& 0& 0_{8-2a}
\end{array}\right), \quad
v_a=\left(\begin{array}{ccc}
0_{2a-2}& 0& 0\\
0& v& 0\\
0& 0& 0_{8-2a}
\end{array}\right).
\end{equation}

The proof of the following lemma is a straightforward calculation.
Recall the notation from \eqref{selfdual}.

\begin{lemma}
The real representation~$\Lambda^4_+$ of\/~$\SO(8,\mathbb R)$ has
highest weight
$2\lambda_4=(x_1+x_2+x_3+x_4)$ corresponding to the
matrix~$(x_1+x_2+x_3+x_4)J$ acting on the two dimensional real weight
space with basis~$\{\mu_1, \nu_1\}$, where
\begin{equation}
\label{first}
\begin{aligned}
\mu_1 &=\mbox{\rm Re} \Omega_J = e^{1357}-e^{1467}-e^{1368}-e^{1458}, \\
\nu_1 &=\mbox{\rm Im} \Omega_J=-e^{1468}+e^{1358}+e^{1457}+e^{1367},
\end{aligned}
\end{equation}
where~$\Omega_J$ is defined in \eqref{Omega}
Conjugating two of the complex linear factors ~$e_{2b-1}+i e_{2b}$
and~$e_{2c-1}+i e_{2c}$ in~$\Omega_J$, \eqref{Omega},
gives rise to weight spaces with weights having a coefficient~$-1$ for
$x_b$ and~$x_c$ and coefficient~$+1$ for the remaining~$x_a$.
Thus we define  three other real weight spaces with bases~$\{\mu_j,
\nu_j\}$ and weights as listed below:
\begin{equation}
\label{second}
\mu_2 =e^{1357}+e^{1467}-e^{1368}+e^{1458} {\rm \ and\ }
\nu_2=-e^{1468}-e^{1358}+e^{1457}-e^{1367}
\end{equation}
with weight\/~$2(\lambda_2-\lambda_4)=x_1+x_2-x_3-x_4$;

\begin{equation}
\label{third}
\mu_3=e^{1357}+e^{1467}+e^{1368}-e^{1458} {\rm \ and\ }
\nu_3=-e^{1468}-e^{1358}-e^{1457}+e^{1367}
\end{equation}
with weight\/~$2(\lambda_1-\lambda_2+\lambda_3)= x_1-x_2+x_3-x_4$; and

\begin{equation}
\label{fourth}
\mu_4=e^{1357}-e^{1467}+e^{1368}+e^{1458}{\rm \ and\ }
\nu_4=-e^{1468}+e^{1358}-e^{1457}-e^{1367}
\end{equation}
with weight\/~$2(\lambda_1-\lambda_3)= x_1-x_2-x_3+x_4$.
\end{lemma}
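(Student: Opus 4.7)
The plan is to compute the action of the Cartan generators $t_a$ on explicit complex weight vectors in $\Lambda^4(V) \otimes \mathbb{C}$, then translate complex weight lines into real 2-dimensional weight spaces by taking real and imaginary parts.

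First I would verify the basic action on the complex line bundles: the dual action of $t_a$ sends $e_{2a-1} \mapsto -e_{2a}$ and $e_{2a} \mapsto e_{2a-1}$, so the complex vector $e_{2a-1} + i e_{2a}$ satisfies $t_a\cdot (e_{2a-1}+ie_{2a}) = i(e_{2a-1}+ie_{2a})$, while $t_b$ annihilates it for $b \neq a$. By derivation, the 4-form $\Omega_J = \bigwedge_{a=1}^{4}(e_{2a-1}+ie_{2a})$ therefore satisfies $(\sum x_a t_a) \cdot \Omega_J = i(x_1+x_2+x_3+x_4)\,\Omega_J$, identifying it as a complex weight vector of weight $i(x_1+x_2+x_3+x_4)$. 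Writing $\Omega_J = A + iB$ with $A = \mathrm{Re}\,\Omega_J$, $B = \mathrm{Im}\,\Omega_J$, separating real and imaginary parts converts the complex eigenvalue equation into $(\sum x_a t_a)\cdot A = -(x_1+x_2+x_3+x_4) B$ and $(\sum x_a t_a)\cdot B = (x_1+x_2+x_3+x_4)A$, which is precisely the action of the matrix $(x_1+x_2+x_3+x_4)J$ on the real basis $\{A,B\}$. This gives the first weight space, with highest weight $2\lambda_4$.

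Next I would expand $\Omega_J$ and recognize the result in the self-dual basis $\{e^{jklm}\}$. After grouping the sixteen monomials in $(e_1+ie_2)\wedge\cdots\wedge(e_7+ie_8)$ into real and imaginary parts and applying the definition $e^{jklm} = e_j\wedge e_k\wedge e_l \wedge e_m + \star(\cdots)$, one identifies $\mathrm{Re}\,\Omega_J = \mu_1$ and $\mathrm{Im}\,\Omega_J = \nu_1$ as displayed in \eqref{first}. Self-duality is automatic: every combination of the $e^{jklm}$ lies in $\Lambda^4_+(V)$ by construction, and since $\Lambda^4_+(V)$ is an irreducible $\SO(8)$-subrepresentation containing the highest weight vector $\Omega_J$, the weight-space analysis stays inside it.

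For the remaining three weight spaces I would replace the complex factors $e_{2b-1}+ie_{2b}$ and $e_{2c-1}+ie_{2c}$ in $\Omega_J$ by their complex conjugates $e_{2b-1}-ie_{2b}$ and $e_{2c-1}-ie_{2c}$. Since $t_b \cdot(e_{2b-1}-ie_{2b}) = -i(e_{2b-1}-ie_{2b})$, this flips the sign of the $x_b$ and $x_c$ coefficients, producing complex weights $i(x_1+x_2+x_3+x_4)$ with minus signs at positions $b,c$. Taking $(b,c) = (3,4), (2,4), (2,3)$ yields the three weights $x_1+x_2-x_3-x_4$, $x_1-x_2+x_3-x_4$, and $x_1-x_2-x_3+x_4$, and expanding the real and imaginary parts exactly as in step two yields the formulas for $\{\mu_j,\nu_j\}$, $j=2,3,4$, in \eqref{second}--\eqref{fourth}. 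The expressions for these weights as integer combinations of the fundamental weights $\lambda_1,\lambda_2,\lambda_3,\lambda_4$ follow by direct substitution from \eqref{21b}.

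The main obstacle is purely clerical: the sixteen-term expansion of each $\Omega_J$-variant must be carefully reorganized into the seven self-dual generators, and the Hodge signs in $e^{jklm}$ (which depend on the parity of the permutation $(j,k,l,m,p,q,r,s)$) must be tracked consistently. Once the expansion $\mathrm{Re}\,\Omega_J = e^{1357} - e^{1368} - e^{1458} - e^{1467}$ is obtained in the first case, the other three cases reduce to reading off the sign changes induced by conjugation, so the bookkeeping is the only substantive content of this "straightforward calculation."
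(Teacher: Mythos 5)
Your proposal is correct and follows essentially the same route as the paper: a direct computation of the Cartan action on the stated vectors, verifying that $x\cdot\mu_j=-(\mathrm{weight})\,\nu_j$ and $x\cdot\nu_j=(\mathrm{weight})\,\mu_j$. The only organizational difference is that you exploit the derivation property on the decomposable complex form $\Omega_J$ (and its conjugated variants) to read off each weight as a sum of eigenvalues, whereas the paper computes the brackets $[t_a,e^{jklm}]$ term by term in the real self-dual basis; both reduce to the same bookkeeping.
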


The decomposition in equation
\eqref{intrinsic} expresses~$\cay$ as a sum of a zero weight vector
and a highest weight vector for~$\pi_4$.

The intertwining diagram in Figure \ref{31} implies that~$\psi$ maps a
weight space of the representation~$\pitwo$ into the corresponding
weight space for the representation~$\pifour\circ\phi$.  Since~$\phi$
interchanges~$\lambda_1$ and~$\lambda_4$:
\begin{enumerate}
\item the weight space for
$2\lambda_1=2x_1$ in the representation~$\pifour\circ\phi$ is the weight
space for~$2\lambda_4= x_1+x_2+x_3+x_4$ in the representation
$\pifour$,
\item
the weight space for~$2(\lambda_2-\lambda_1)=2x_2$ in the
representation~$\pifour\circ\phi$ is the weight
space for~$2(\lambda_2-\lambda_4)= x_1+x_2-x_3-x_4$ in the representation
$\pifour$,
\item
the weight space for~$2(\lambda_4-\lambda_2+\lambda_3)=2x_3$ in the
representation~$\pifour\circ\phi$ is the weight space
for~$2(\lambda_1-\lambda_2+\lambda_3)= x_1-x_2+x_3-x_4$ in the
representation~$\pifour$
\item the weight space for~$2(\lambda_4-\lambda_3)=2x_4$ in the
representation~$\pifour\circ\phi$ is the weight
space for~$2(\lambda_1-\lambda_3)= x_1-x_2-x_3+x_4$ in the representation
$\pifour$.
\end{enumerate}
If we conjugate~$\phi$ by an element~$k$, and multiply~$\psi$ by~$\pifour(k)$ equation
\eqref{0} becomes
\begin{equation}\label{conjugate} \pifour(k\phi(g)k^{-1})\pifour(k)\psi(v)=\pifour(k)\psi(\pitwo(g)v).
\end{equation}
Conjugating by an appropriate element of the maximal torus, we can rotate the basis in each weight space and assume
\begin{equation}\label{1}
\psi(u_j)=\frac12 \mu_j,
\end{equation}
for~$j=1,\ldots,4$, and~$u_j$ is defined by \eqref{23}.  The
factor~$\frac12$ is required in order that~$\psi$ be an isometry.

Note that~$\pi_4(k)\psi (z)=\psi (z)$ for $k$ in the maximal torus and
$z$ a zero weight vector in $sigma^2_0(V)$.

The zero weight space of~$\sigma^2_0(V)$, when presented as
matrices, is the three dimensional space with an orthogonal basis
consisting of the matrices
$$
z_1=\left(\begin{array}{cc}
    I_4 & 0\\
    0 & -I_4
\end{array}\right),\quad z_2=
\left(\begin{array}{ccc}
    I_2& 0 & 0\\
    0 & -I_4 & 0\\
    0 &0 & I_2
\end{array}\right)$$
$$
z_3=
\left(\begin{array}{cccc}
    I_2 & 0 & 0 & 0\\
    0 & -I_2 & 0 & 0\\
    0 & 0 & I_2 & 0\\
    0 & 0 & 0 & -I_2
\end{array}\right).
$$

\begin{lemma}
The intertwiner~$\psi$ acts on the~$0$ weight space as follows:
\begin{equation}
\label{2}
\psi(z_1)=2 e^{1234},\quad \psi(z_2)=2e^{1278}, \quad
\psi(z_3)=2e^{1256}.
\end{equation}
\end{lemma}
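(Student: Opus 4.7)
The plan is to pin down $\psi$ on the zero-weight space by propagating the normalization $\psi(u_j) = \tfrac{1}{2}\mu_j$ fixed in~\eqref{1} through the intertwining identity
\[
\psi\bigl(\pi_2(g)\,w\bigr) = \pi_4\bigl(\phi(g)\bigr)\,\psi(w).
\]
Since $\phi$ preserves the zero weight, $\psi$ restricts to a linear isomorphism between the three-dimensional zero-weight subspace of $\sigma_0^2(V)$ with basis $\{z_1, z_2, z_3\}$ and the three-dimensional zero-weight subspace of $\Lambda_+^4(V)$ with basis $\{e^{1234}, e^{1256}, e^{1278}\}$ (after the self-dual identifications $e^{5678}=e^{1234}$, $e^{3478}=e^{1256}$, $e^{3456}=e^{1278}$). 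The coefficient $2$ in the stated formulas is already consistent with $\psi$ being an isometry, since $|z_i| = 2\sqrt{2} = |2\, e^{1234}|$.

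To fix the pairing between these two bases, I would differentiate the intertwining identity and apply a pair of root vectors to descend from $u_a$ to the zero-weight subspace. Because $-2x_a$ is not itself a root of $D_4$, a single commutator will not land in the zero-weight space; however any pair of roots $\alpha,\beta$ of $\mathfrak{so}(8)$ with $\alpha+\beta=-2x_a$ works, for instance $\alpha = -x_a-x_b$ and $\beta = -x_a+x_b$ for any $b\neq a$. The identity
\[
\psi\bigl(\mathrm{ad}_{E_\beta}\,\mathrm{ad}_{E_\alpha}\, u_a\bigr) \;=\; \pi_4\bigl(\phi(E_\beta)\bigr)\,\pi_4\bigl(\phi(E_\alpha)\bigr)\,\tfrac{1}{2}\mu_a
\]
then has both sides explicit: the left-hand side is a specific combination of $z_1,z_2,z_3$ obtained by iterated matrix commutators, following the template of the elementary formulas displayed just before~\eqref{23}; the right-hand side is a specific combination of $e^{1234}, e^{1256}, e^{1278}$ obtained by applying the standard $\mathfrak{so}(8)$-action to the explicit $\mu_a$ recorded in \eqref{first}--\eqref{fourth}. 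Varying the indices $a$ and $b$ yields enough independent linear relations to read off each $\psi(z_i)$.

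The principal obstacle is tracking the outer automorphism $\phi$ at the level of individual root vectors, since $\phi$ is not inner and must be fixed consistently with the torus-conjugation freedom already spent in \eqref{1}. The input needed is the explicit action of $\phi$ on the Cartan coordinates, forced by $\lambda_1 \leftrightarrow \lambda_4$ and $\lambda_2, \lambda_3$ fixed (giving $\phi(x_1)=\tfrac12(x_1+x_2+x_3+x_4)$, and so on), which in turn induces a permutation of the $24$ roots of $D_4$ and identifies the root spaces up to scalar. Once this root-level matching is settled, the remaining work is routine matrix arithmetic. An economical route is to perform the commutator analysis above for a single pair $(a,b)$ to establish the correspondence $z_i \leftrightarrow e^{jklm}$ --- which turns out to pair each diagonal $z_i$ with twice the self-dual volume form of the $4$-plane spanned by its $+1$-eigenvectors, a geometrically natural matching under triality --- and then to invoke the isometry of $\psi$ together with the torus-covariance \eqref{conjugate} to pin down each coefficient at exactly $2$.
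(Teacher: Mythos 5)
Your setup is sound: both zero-weight spaces are indeed $3$-dimensional with the bases you name, and the factor $2$ is the one consistent with $\psi$ being an isometry. But the step that is supposed to do all the work does not go through as described. To evaluate the right-hand side of your identity
\[
\psi\bigl(\mathrm{ad}_{E_\beta}\,\mathrm{ad}_{E_\alpha}\,u_a\bigr)
=\pi_4\bigl(\phi(E_\beta)\bigr)\,\pi_4\bigl(\phi(E_\alpha)\bigr)\,\tfrac12\mu_a,
\]
you must know $\phi(E_\alpha)$ as an explicit element of $so(8)$, not merely that it lies in the root space of $\phi(\alpha)$ ``up to scalar.'' That scalar (a phase on each root plane of the compact form) is precisely what determines the signs and the matching in \eqref{2}: the zero-weight block of $\psi$ is a priori an arbitrary $3\times3$ orthogonal matrix, and weight bookkeeping says nothing about it. Nor can you dodge the issue by choosing $\alpha,\beta$ among the roots fixed by $\phi$: the $\phi$-invariant roots are $\pm(x_i-x_j)$ with $i,j\in\{2,3,4\}$ and $\pm(x_1+x_j)$ with $j\in\{2,3,4\}$, and no two of these sum to $-2x_a$ for any $a$, so every admissible pair $(\alpha,\beta)$ involves a root moved by $\phi$. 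Fixing $\phi$ on those root vectors amounts to constructing the triality automorphism explicitly (e.g.\ via the Clifford algebra or the octonions), which is exactly the machinery this argument is designed to avoid; until that is supplied, the ``routine matrix arithmetic'' cannot be started, and the geometrically plausible matching of $z_i$ with the $4$-plane of its $+1$-eigenvectors remains a guess rather than a deduction.

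The paper's proof sidesteps the problem by using only group elements $g$ with $\phi(g)=g$, namely the elements realizing the Weyl reflections in the $\phi$-invariant roots $\alpha_3$, $\alpha_2$ and $\alpha_2+\alpha_3$ (see \eqref{33}). For such $g$ the intertwining relation collapses to $\psi(\pi_2(g)z)=\pi_4(g)\psi(z)$, with no unknown $\phi$ left to evaluate, and these reflections permute $\{z_1,z_2,z_3\}$ and $\{e^{1234},e^{1256},e^{1278}\}$ compatibly; together with the isometry condition and the residual sign and torus freedom of \eqref{conjugate} and \eqref{1}, this forces \eqref{2}. If you want to retain your lowering-operator strategy, you would first have to produce a pinned, explicit realization of $\phi$ on the non-invariant root vectors; otherwise I recommend switching to the $\phi$-fixed reflections.
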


\begin{proof}
The involution~$\phi$ leaves the simple root~$\alpha_3=x_3-x_4$
invariant, and hence also the real~$2$ dimensional subspace which is a
real form of the complex subspace of root vectors~$E_{\pm \alpha_3}$,
with a basis:
\begin{equation}
\label{33}
E_1=\left({\begin{array}{cccc}
    0 & 0& 0 & 0\\
    0 & 0& 0 & 0\\
    0 & 0& 0 & I_2\\
    0 & 0 & -I_2& 0
\end{array}}\right),
\quad E_2=
\left(\begin{array}{cccc}
    0 & 0& 0 & 0\\
    0 & 0& 0 & 0\\
    0 & 0& 0 & J\\
    0 & 0& J& 0
\end{array}\right).
\end{equation}

The element
\begin{equation*}
g_1=exp((\pi/2) E_1)\in \SO(8)
\end{equation*}
acting in~$\sigma^2_0(V)$ fixes~$z_1$ and interchanges~$z_2$ and
$z_3$, and acting in~$\Lambda^4_+$ it fixes~$e^{1234}$ and
interchanges~$e^{1256}$ and~$e^{1278}$. In fact, the element $g_1$
acts in the  coadjoint representation as reflection in $\alpha_3$. Since~$\phi(g_1)=g_1$, the image of~$z_1$ under~$\psi$ must be a multiple of~$e^{1234}$. The
isometry condition implies~$\psi(z_1)= \pm 2 e^{1234}$. We
normalize the multiple to~$+2$, using $-\psi$ if necessary and
another rotation, see equation~\eqref{conjugate}, by an element of 
the maximal torus to guarantee that $\psi(u_i)=\frac12 \mu_i$, \eqref{1}. 
The element
$g_2\in SO(8)$ acting in the coadjoint representation as Weyl reflection in $\alpha_2$ is also invariant under $\phi$. It interchanges $z_1$ and $z_3$ in 
the space of traceless symmetric matrices and interchanges~$e^{1234}$
and~$e^{1256}$ in the self-dual forms, so 
\begin{eqnarray*}
\psi(z_3)&=&\psi(\pi_2(g_2)z_1)\\
&=&\pi_4(\phi(g_2))\psi(z_1)\,\,\mbox{\rm eq. \eqref{0}}\\
&=&\pi_4(g_2)2e^{1234}\\
&=&e^{1256}
\end{eqnarray*}
A similar argument using the element whose coadjoint action is reflection in $\alpha_2+\alpha_3$ shows that $\psi(z_2)=2e^{1278}$ and completes the
proof of equation(\ref {2}).
\end{proof}

Putting together equations (\ref{1}) and (\ref{2}) define
\begin{equation}
\label{3}
A_1:=\left(
\begin{array}{cc} \frac 78& 0\\ 0 & -\frac18 I_7
\end{array}
\right),
\end{equation}
and
\begin{equation}
\label{4}
\cay:= e^{1234}+e^{1256}+e^{1278}+ e^{1357}-e^{1467}-e^{1368}- e^{1458}.
\end{equation}
Then
\begin{eqnarray}
\psi(A_1)&=& \psi\left(\frac18(z_1+z_2+z_3)
+\frac12(u_1)\right)\nonumber\\ &=&\frac14(e^{1234}+e^{1256}+e^{1278}+
e^{1357}-e^{1467}-e^{1368}-e^{1458})\nonumber\\ &=&\frac 14
\cay.\label{5}
\end{eqnarray}
\vskip5mm

\section{Proofs of Theorem~\ref{thm} and Theorem \ref{thm12} }
\label{five}

\begin{proposition}\label{comassone}
The self dual form
\begin{equation*}
\cay=e^{1234}+e^{1256}+e^{1278}+e^{1357}-e^{1467}-e^{1368}-e^{1458}
\end{equation*}
has comass~$1$.
\end{proposition}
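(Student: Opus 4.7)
The strategy is to transport the comass computation from $\Lambda^4_+(V)$ to the space $\sigma_0^2(V)$ of traceless symmetric $8\times 8$ matrices via the isometric intertwiner $\psi$, using the identity $\cay = 4\psi(A_1)$ established in equation \eqref{5}. For any simple unit $4$-vector $\xi = v_1\wedge v_2\wedge v_3\wedge v_4$, the Hodge dual $\ast\xi$ is simple, orthogonal to $\xi$, and of the same unit norm, so the self-dual projection $\xi_+ = \tfrac12(\xi + \ast\xi)$ has $|\xi_+|^2 = \tfrac12$. Self-duality of $\cay$ gives $\cay(\xi) = \langle\cay,\xi_+\rangle$, and since $\SO(8)$ acts transitively on oriented $4$-planes, the set $\mathcal{E}_+ = \{\xi_+ : \xi\ \text{simple unit}\}$ is exactly the $\hat\pi_4$-orbit of the model element $\tfrac12 e^{1234} = \tfrac12(e_{1234}+e_{5678})$.

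I would then pull $\mathcal{E}_+$ back to $\sigma_0^2(V)$ through $\psi^{-1}$. The identity $\psi(z_1) = 2e^{1234}$ from equation \eqref{2} gives $\psi^{-1}(\tfrac12 e^{1234}) = z_1/4$. The intertwining relation \eqref{0}, combined with the fact that $\phi$ is a bijection of $\Spin(8)$, guarantees that $\psi^{-1}(\mathcal{E}_+)$ is the full $\hat\pi_2$-orbit of $z_1/4$. Since $\hat\pi_2$ acts by conjugation, this orbit consists of all traceless symmetric matrices with spectrum $\bigl(\tfrac14,\tfrac14,\tfrac14,\tfrac14,-\tfrac14,-\tfrac14,-\tfrac14,-\tfrac14\bigr)$. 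Because $\psi$ is an isometry for the trace form on matrices and the standard form on $4$-forms, the comass problem becomes
\[
\|\cay\| \;=\; 4\sup_{g\in\SO(8)}\bigl\langle A_1,\, g(z_1/4)g^{-1}\bigr\rangle \;=\; \sup_{g\in\SO(8)}\operatorname{tr}\bigl(A_1\, gz_1 g^{-1}\bigr).
\]

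The final step is the Ky Fan (von Neumann) trace inequality for symmetric matrices, which states that this supremum equals the inner product of the decreasing eigenvalue sequences of $A_1$ and $z_1$. Pairing the sorted spectra $(7/8,-1/8,-1/8,-1/8,-1/8,-1/8,-1/8,-1/8)$ and $(1,1,1,1,-1,-1,-1,-1)$ yields $\tfrac{7}{8} - \tfrac{3}{8} + \tfrac{4}{8} = 1$, so $\|\cay\| \leq 1$, with equality attained at the extremal orbit element. The step I expect to require the most care is the orbit identification in the second paragraph: one must verify that the twist by the outer automorphism $\phi$ (which does not descend to $\SO(8)$) merely reparameterizes the underlying orbit rather than altering it as a subset of $\sigma_0^2(V)$; this follows because $\phi$ is a bijection of $\Spin(8)$, so $\{\pi_4(\phi(g))\xi_0 : g\}$ and $\{\pi_4(h)\xi_0 : h\}$ coincide as sets. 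Once the orbit description is secured, the comass computation collapses to the elementary eigenvalue bound above.
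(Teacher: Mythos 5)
Your proposal is correct and follows essentially the same route as the paper: both reduce the comass computation, via self-duality and the isometric intertwiner $\psi$ with $\psi(A_1)=\tfrac14\cay$ and $\psi(z_1)=2e^{1234}$, to maximizing $\operatorname{tr}(A_1\,g z_1 g^{-1})$ over $g\in\SO(8)$. The only (minor) difference is at the last step, where you invoke the Ky Fan/von Neumann trace inequality while the paper splits $A_1=D_1-\tfrac18 I_8$ and bounds $\sum_{i\le 4}(g'_{i1})^2-(g'_{i+4,1})^2\le 1$ directly from the unit first column of $g'$; both give the sharp value $1$.
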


\begin{proof}
We need to prove that
\begin{equation}
\sup_{g\in \SO(8)}(\cay, g (e_1\wedge e_2\wedge e_3\wedge e_4))=1
\end{equation}
First of all,~$\cay$ is self-dual and therefore orthogonal to the
anti-self dual part of~$e_1\wedge e_2\wedge e_3\wedge e_4$, so
we have
\begin{equation*}
(\cay, g (e_1\wedge e_2\wedge e_3\wedge e_4))=\frac 12(\cay, g
e^{1234}).
\end{equation*}
Next,
\begin{eqnarray*}
\frac 12 (\cay, g e^{1234})&=& \frac 14 (\cay, g\psi (z_1))\quad
\mbox{\rm by (\ref{2})}\\
&=&(\frac 14 \cay, g\psi(z_1))\\
&=&(\psi(A_1), g\psi(z_1))\quad\mbox {\rm by (\ref{5})}\\
&=&(\psi(A_1),\psi(\phi(g)z_1))\quad\mbox {\rm by (\ref{0})}\\
&=&(A_1, \phi(g)z_1),
\end{eqnarray*}
since~$\psi$ is an isometry.
Now
$$A_1=\left(
{\begin{array}{cc}
    1& 0\\
    0& 0_7
\end{array}}\right) - \frac 18 I_8,$$
and~$(I_8, \phi(g)z_1)=trace (\phi(g)z_1)=0$.  Putting this all
together we have
\begin{equation*}
\begin{aligned}
\sup_{g\in \SO(8)} &
\left( \cay, g (e_1\wedge e_2\wedge e_3\wedge e_4) \right)=
\\
&=
\sup_{g\in \SO(8)} \left(A_1+\frac 18 I_8, \phi(g)z_1\right)
\\
&=\sup_{g'=\phi(g)^{-1}\in \SO(8)} \left( g'\left( \begin{array}{cc} 1&
0
\\
0& 0_7
\end{array} \right) g^{\prime -1}, z_1 \right)
\\ &=\sup_{g'\in \SO(8)} \left\{\sum_{i=1,...,4}
(g'_{i1})^2-(g'_{i+4,1})^2 \right\} \\ &=1,
\end{aligned}
\end{equation*}
proving the result.
\end{proof}

\begin{proposition}\label{morecomassone}
The following self dual forms all have comass~$1$:
\begin{eqnarray*}
\omega_2&=&4\psi(\frac18(z_1+z_2+z_3)-\frac12 u_1)\\
&=&e^{1234}+e^{1256}+e^{1278}-e^{1357}+e^{1467}+e^{1368}+e^{1458}\\
\omega_3&=&4\psi(\frac18(z_1-z_2-z_3)+\frac12 u_2)\\
&=&e^{1234}-e^{1256}-e^{1278}+e^{1357}+e^{1467}-e^{1368}+e^{1458}\\
\omega_4&=&4\psi(\frac18(z_1-z_2-z_3)-\frac12 u_2)\\
&=&e^{1234}-e^{1256}-e^{1278}-e^{1357}-e^{1467}+e^{1368}-e^{1458}\\
\eta_1&=&4\psi(\frac18(z_1+z_2-z_3)-\frac12 u_3)\\
&=&e^{1234}-e^{1256}+e^{1278}+e^{1357}+e^{1467}+e^{1368}-e^{1458}\\
\eta_2&=&4\psi(\frac18(z_1+z_2-z_3)+\frac12 u_3)\\
&=&e^{1234}-e^{1256}+e^{1278}-e^{1357}-e^{1467}-e^{1368}+e^{1458}\\
\eta_3&=&4\psi(\frac18(z_1-z_2+z_3)-\frac12 u_4)\\
&=&e^{1234}+e^{1256}-e^{1278}+e^{1357}-e^{1467}+e^{1368}+e^{1458}\\
\eta_4&=&4\psi(\frac18(z_1-z_2+z_3)-\frac12 u_4)\\
&=&e^{1234}+e^{1256}-e^{1278}-e^{1357}+e^{1467}-e^{1368}-e^{1458}.
\end{eqnarray*}

\end{proposition}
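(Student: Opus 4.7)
The plan is to exhibit each of the seven listed forms as $\hat\pi_4(h)(\pm\cay)$ for a suitable $h\in\SO(8)$, and then conclude by combining Proposition~\ref{comassone} with the facts that the comass norm is $\SO(8)$-invariant and unchanged under $\omega\mapsto -\omega$. The key observation is that the proof of Proposition~\ref{comassone} used essential spectral data of $A_1$ only; any traceless symmetric matrix with the same multiset of eigenvalues as $\pm A_1$ must map under $4\psi$ to an $\SO(8)$-orbit representative of $\pm\cay$.

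The first step is to compute, for each of the seven forms, the diagonal matrix $B$ appearing inside $4\psi(\,\cdot\,)$. Using the diagonal expressions
\begin{equation*}
z_1={\rm diag}(1,1,1,1,-1,-1,-1,-1),\quad z_2={\rm diag}(1,1,-1,-1,-1,-1,1,1),
\end{equation*}
\begin{equation*}
z_3={\rm diag}(1,1,-1,-1,1,1,-1,-1),
\end{equation*}
together with the $2\times 2$ block structure of $u_a$ from \eqref{23}, a direct sign-tracking shows that $\omega_2,\omega_3,\omega_4$ each yield a $B$ with eigenvalue multiset $\{7/8,-1/8,\ldots,-1/8\}$ (the entry $7/8$ located at coordinates $2,3,4$ respectively), matching the spectrum of $A_1$. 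Dually, $\eta_1,\eta_2,\eta_3,\eta_4$ each yield a $B$ with multiset $\{-7/8,1/8,\ldots,1/8\}$ (the entry $-7/8$ at coordinates $5,6,7,8$ respectively), matching the spectrum of $-A_1$.

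Once the spectra are pinned down, the conjugacy step is immediate: two traceless diagonal matrices with the same multiset of eigenvalues are related by a coordinate permutation, which may be realized by a signed permutation matrix in $\SO(8)$. Thus for each listed form there exists $g\in\SO(8)$ with $B=\pi_2(\tilde g)(\pm A_1)$ for some lift $\tilde g\in\Spin(8)$. Applying the intertwining identity~\eqref{0} together with $4\psi(A_1)=\cay$ from~\eqref{5}, one deduces
\begin{equation*}
4\psi(B)=\pm\,\pi_4(\phi(\tilde g))\,\cay=\pm\,\hat\pi_4(h)\,\cay,
\end{equation*}
where $h=\rho_1(\phi(\tilde g))\in\SO(8)$. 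Since $\hat\pi_4(h)$ preserves the set of unit decomposable $4$-vectors, it is an isometry for the comass norm, and the comass norm is manifestly invariant under negation. Hence each listed form has comass equal to $\|\cay\|=1$.

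The only non-routine ingredient is the sign bookkeeping in the spectrum computation of step one; once that verification is in hand, the remainder of the proof is a direct consequence of the machinery already assembled for Proposition~\ref{comassone}. There is no conceptual obstacle beyond the observation that the argument used for the orbit representative $A_1$ depends only on its spectrum, not on its particular diagonal placement.
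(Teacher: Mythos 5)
Your proof is correct and follows essentially the same route as the paper: identify the matrix inside $4\psi(\cdot)$ as $A_i$ for $i=2,3,4$ or $-A_i$ for $i=5,\dots,8$, use conjugacy of traceless diagonal matrices with equal spectra together with the intertwining identity~\eqref{0}, and reduce to Proposition~\ref{comassone}. You are in fact more careful than the paper's one-line argument, which asserts that all these matrices are $\SO(8)$-conjugate --- literally false for the $-A_i$, whose spectrum $\{-7/8,(1/8)^7\}$ differs from that of $A_1$ --- whereas you correctly separate the two families and supply the missing observation that the comass is invariant under $\omega\mapsto-\omega$.
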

\begin{proof}
Let~$D_i$ be the diagonal matrix with~$1$ the~$i$th position, all
other entries~$0$, and~$A_i:=D_i-\frac18 I$. The expressions in
parentheses on the right side of the equations above equal~$A_i$
for~$i=2,3,4$ and~$-A_i$ for~$i=5,6,7,8.$ These matrices are~$SO(8)$
conjugate, hence so are the corresponding self-dual~$4$-forms.
\end{proof}

For all the forms ~$\nu=\omega_j,$ or~$\nu=\eta_j,$~$j=1,2,3,4$ we have
\begin{equation}\label{max}
max_{g\in \SO(8)}(\nu, g (e_1\wedge e_2\wedge e_3\wedge e_4))=
(\nu,e_1\wedge e_2\wedge e_3\wedge e_4)=1.
\end{equation}
Any convex combination of the~$\omega_j,\eta_j$ also satisfies
\eqref{max} and therefore has comass~$1$. Conversely, any
self-dual~$4$-form satisfying \eqref{max} corresponds under triality
to a traceless symmetric~$8\times 8$ matrix~$A$ satisfying
 \begin{equation*}
max_{g\in \SO(8)}(A, gz_1)=1, \quad\makebox{\rm and }\quad (A, z_1)=1.
\end{equation*}
An elementary argument cf. \cite[Lemma 3.4]{DHM}, shows that any such
matrix is a convex combination of the matrices
$$\{A_1,\ldots, A_4,-A_5,\dots, -A_8\}.$$ Taking the image under
$\psi$, we see that any self-dual~$4$-form~$\nu$, satisfying
\eqref{max} is a convex combination of the~$\omega_j,\eta_j$. Any
comass~$1$ self-dual~$4$-form is~$\SO(8)$-conjugate to one satisfying
\eqref{max}, which we have just shown to be a convex combination of
the~$\omega_j,\eta_j$.

We will now prove Theorem~\ref{thm12}, to the effect that every
self-dual~$4$-form on~$\R^8$ satisfying \eqref{11}
is~$\SO(8)$-conjugate to the Cayley form.

\begin{proof}[Proof of Theorem~\ref{thm12}]
Let~$\omega$ be a self-dual~$4$-form satisfying \eqref{11}.  We
can assume that~$\omega$ is normalized to unit comass. As noted 
above, the comass~$1$ condition implies that~$\omega$ is conjugate 
to a convex combination 
\begin{equation}\label{pf} \omega=a_1 \omega_1 +
a_2\omega_2 + a_3 \omega_3 + a_4\omega_4 + a_5\eta_1 + a_6 \eta_2+
a_7 \eta_3+ a_8 \eta_4,
\end{equation}
with $a_i\geq 0$ and $\sum a_i=1$. Since the $\{\frac1{\sqrt{14}}\omega_i,  \frac1{\sqrt{14}}\eta_i\}$ form an orthonormal set, \eqref{pf} implies
$$\frac{|\omega|^2}{14}= \sum a_i^2\leq \sum a_i= 1,$$
with equality if and only if all the $a_i$ except one are zero.
Thus to achieve the maximum Euclidean norm 14, and satisfy
\eqref{max},  $\omega$ must be one of the~$8$
forms~$\{\omega_j,\eta_j|j=1,\ldots,4\}$ all of which
are~$\SO(8)$-conjugate to the Cayley form.
\end{proof}

\section{Stabilizer of the Cayley form}

In this section we give a proof using triality of Theorem~\ref{stabilizer}
stating that the stabilizer of the Cayley form is~$\Spin(7)$.
\begin{proof}
Recall, \eqref{0}, that there is a linear isometry ~$\psi:\sigma^2_0(V)\rightarrow \Lambda_+^4(V)$ such that
for all~$g\in Spin(8)$
$$ \pi_4(\phi(g)) \psi(v)=\psi(\pi_2(g)v),\quad{\mbox{\rm and}}\quad
\psi(A_1)=\cay,$$ where~$\phi$ be the triality automorphism
interchanging the fundamental weights~$\lambda_1$ and~$\lambda_4$
and~$A_1$ is the  diagonal matrix defined in the proof of
Proposition~\ref{morecomassone}. If~$G$ denotes the stabilizer of
$A_1$ in the representation~$\pi_2$, then the stabilizer of~$\cay$
in the representation~$\pi_4$ is~$\phi(G)$. Both representations
$\pi_2$ and~$\pi_4$ factor  through~$\rho_1:\Spin(8)\rightarrow
\SO(8)$. Composing with~$\rho_1$, we see that the stabilizer of
$\cay$ in the representation~$\hat\pi_4$ of~$\SO(8)$ (see
\eqref{factor}) is~$\rho_1\phi(G)$.

A simple matrix calculation shows
that the~$SO(8)$-stabilizer  of~$A_1\in \sigma^2_0(V)$ is the subgroup
${\rm O}(7)\cong \{\pm I_8\}\times \SO(7)\cong{\mathbb Z}_2\times \SO(7)$.
Let~$\gamma$ be the ``volume form" in the Clifford algebra:
 ~$$\gamma=f_1f_2f_3f_4f_5f_6f_7f_8,$$
which is also an element of~$\Spin(8)$.
In the vector representation~$\rho_1(\gamma)=-I_8;$
therefore,
$$ G =\rho_1^{-1}(\{\pm I_8\}\times \SO(7))= \{1, \gamma\}\times\rho_1^{-1}(\SO(7))=\{1, \gamma\}\times \Spin(7).$$
To complete the proof, we will show  that~$\rho_1\phi$ is
injective on the subgroup~$\Spin(7)$, that is,~${\rm
Ker}(\rho_1\phi)\cap\Spin(7)=\{1\}$.

The~$\pm 1$ eigenspaces of~$\gamma$ define the
splitting of the Clifford module:~$\Delta=\Delta_+\oplus \Delta_-$,
and the kernel of the representation~$\rho_4:\Spin(8)\rightarrow {\rm Aut}(\Delta_+)$
is~$\{1,\gamma\}$.

Since~$\phi$ conjugates the representation~$\rho_1$ to~$\rho_4$,
$$\phi(\{\pm 1\})=\phi(ker\rho_1)=ker\rho_4=\{1, \gamma\}.$$
In fact, triality induces a representation
of the symmetric group~$\Sigma_3$ on the center of~$\Spin(8)$,
$\{\pm 1,\pm \gamma\},$  permuting the non-identity elements
$\{-1,\gamma, -\gamma\}$, and~$\phi$
acts as the transposition of the first two elements.

Thus
$${\rm Ker}(\rho_1\phi)\cap \Spin(7)=\phi^{-1}({\rm Ker}\rho_1)\cap \Spin(7)$$
$$\hskip8mm=\phi({\rm Ker}\rho_1)\cap \Spin(7)=\{1,\gamma\}\cap \Spin(7)=\{1\}.~$$
This shows that~$\rho_1\phi$ is injective on~$\Spin(7)$, and completes the proof that the stabilizer of~$\cay$ in~$\SO(8)$ is isomorphic to~$\Spin(7)$.\end{proof}

The following classification by orbit type of comass 1 self-dual
$4$-forms (calibrating forms) is given in \cite{DHM}.
\begin{enumerate}
 \item
Type~$(1,0)$,~$\phi=\cay$, Cayley geometry;
 \item
Type~$(2,0)$,~$\phi=\frac12(\cay+\omega_2)=e^{1234}+e^{1256}+e^{1278}$,
K\"ahler 4-form, that is, the square of the K\"ahler form;
 \item
Type~$(3,0)$,
$\phi=\frac13(\cay+\omega_2+\omega_3)=\frac16(\tau_I^2+\tau_J^2+\tau_k^2)$,
~Kraines form, quaternionic geometry;
\item
Type~$(1,1)$,~$\phi=\frac12(\cay+\eta_4)=Re[(e_1+i e_7)(e_2 -i
e_8)(e_3+i e_5)(e_4-i e_6)]$, special Lagrangian geometry;
\item Type~$(2,1)$,~$\phi=\frac14 \cay+\frac12 \omega_2+\frac
14\eta_4$, ~$\mu=\frac14\cay +\frac14\cay+\frac12\eta_4$,
$\psi=\frac13(\cay+\omega_2+\eta_4)$, complex Lagrangian geometry;
\item
Type~$(2,2)$,
$\phi=\frac14(\cay+\omega_2+\eta_3+\eta_4)=(e_{12}+e_{78})(e_{34}+e_{56})$;
\item Type~$(3,1)$,~$\phi=\frac14(\cay+\omega_2+\omega_3+\eta_4)$;
\item Type~$(3,2)$,~$\psi=\frac15(\cay+\omega_2+\omega_3+\eta_3+\eta_4)$;
\item Type~$(3,3)$,~$\mu=\frac16(\cay
+\omega_2+\omega_3+\eta_2+\eta_3+\eta_4)$.
\end{enumerate}

\section{A counterexample}\label{counterx}
One might have thought that for any choice of coefficients~$\pm 1$ in a
linear combination of the forms
$$\{e^{1234},e^{1256},e^{1278},e^{1357},e^{1467},e^{1368},e^{1458}\}$$
would give a form
of comass~$1$, which would, therefore, realize the maximal Wirtinger ration
$14$.  However, a calculation similar to that in the proof of
Proposition~\ref{comassone} shows that the form~$\omega_+$ with all
coefficients~$+1$ has comass~$2$.

\begin{proposition}\label{comasstwo}
The self dual form
\begin{equation*}
\omega_+=e^{1234}+e^{1256}+e^{1278}+e^{1357}+e^{1467}+e^{1368}+e^{1458}
\end{equation*}
has comass~$2$.
\end{proposition}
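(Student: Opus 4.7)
The plan is to follow the proof of Proposition~\ref{comassone} essentially verbatim, replacing $A_1=\psi^{-1}(\tfrac14\cay)$ by the preimage $B:=\psi^{-1}(\omega_+)$ under the triality intertwiner~$\psi$, and then solve the resulting eigenvalue optimization for~$B$. The novelty is that the new matrix~$B$ has a spectrum spread out enough to force comass~$2$ rather than~$1$, even though $|\omega_+|^2=14=|\cay|^2$.

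First I would compute $B$ explicitly in terms of the basis $\{z_1,z_2,z_3,u_1,\ldots,u_4\}$ of the weight spaces used in Section~4. The preimages of $e^{1234}$, $e^{1256}$, $e^{1278}$ are given directly by~\eqref{2}, while the remaining four summands $e^{1357}+e^{1467}+e^{1368}+e^{1458}$ can be rewritten as $\tfrac12(-\mu_1+\mu_2+\mu_3+\mu_4)$ by inverting the linear relation in \eqref{first}--\eqref{fourth}, and then pulled back using $\psi^{-1}(\mu_j)=2u_j$ from~\eqref{1}. Collecting terms yields the diagonal matrix
\[
B=\tfrac12(z_1+z_2+z_3)+(-u_1+u_2+u_3+u_4)=\mathrm{diag}\bigl(\tfrac12,\tfrac52,\tfrac12,-\tfrac32,\tfrac12,-\tfrac32,\tfrac12,-\tfrac32\bigr),
\]
which is traceless with eigenvalues $5/2$ (once), $1/2$ (four times), and $-3/2$ (three times); as a sanity check $|B|^2=14$, matching $|\omega_+|^2$.

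Next I would repeat the chain of identities from the proof of Proposition~\ref{comassone} verbatim --- self-duality of~$\omega_+$, the equality $\psi(z_1)=2e^{1234}$, the intertwining relation~\eqref{0}, and the isometry property of~$\psi$ --- to reduce the comass to the matrix trace maximum
\[
\|\omega_+\|=\sup_{g\in\SO(8)}\bigl(\omega_+,\,g\cdot e_1\wedge e_2\wedge e_3\wedge e_4\bigr)=\tfrac14\sup_{g\in\SO(8)}\mathrm{tr}\bigl(B\,gz_1g^{-1}\bigr).
\]
This supremum is a standard Ky~Fan rearrangement problem: writing $gz_1g^{-1}=P_V-P_{V^\perp}$ for a $4$-dimensional subspace $V\subset\R^8$ and using $\mathrm{tr}(B)=0$, one gets $\mathrm{tr}(B\,gz_1g^{-1})=2\,\mathrm{tr}(P_V B)$, which is maximized by taking $V$ to be the span of the eigenvectors corresponding to the four largest eigenvalues of $B$. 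The maximum value is $2(\tfrac52+3\cdot\tfrac12)=8$, so $\|\omega_+\|=8/4=2$.

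The only step that requires real attention is the first --- the inversion of~$\psi$ on~$\omega_+$ and in particular the sign bookkeeping among the four weight vectors $\mu_j$; once $B$ is written out as an explicit diagonal matrix, the remaining reduction and the eigenvalue-pairing maximization are entirely routine.
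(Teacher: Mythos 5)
Your proposal is correct and takes essentially the same route as the paper: both use the triality intertwiner $\psi$ to reduce the comass of $\omega_+$ to the trace maximization $\tfrac14\sup_{g}\operatorname{tr}\bigl(B\,gz_1g^{-1}\bigr)$ for the same (up to a coordinate permutation) traceless diagonal matrix with spectrum $\bigl\{\tfrac52,\ \tfrac12\ (\times 4),\ -\tfrac32\ (\times 3)\bigr\}$, yielding $\tfrac14\cdot 8=2$. The only cosmetic differences are that the paper produces this matrix by writing $\omega_+=\tfrac12\omega_2-\tfrac12\omega_4+\tfrac12\eta_1+\tfrac12\eta_3$ in the basis of Proposition~\ref{morecomassone} rather than by inverting $\psi$ weight space by weight space, and it evaluates the supremum by a term-by-term bound together with an explicit maximizing $g'$ rather than by the von Neumann--Ky Fan rearrangement argument.
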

\begin{proof}
We have~$\omega_+= \frac12 \omega_2-\frac12 \omega_4+\frac12
\eta_1+\frac12 \eta_3$.  Thus the corresponding symmetric traceless
matrix is
$$ A_+:=\frac12 (A_2)-\frac12 (A_4)+ \frac12(-A_5) +\frac12 (-A_7),$$ where
the~$A_j$ are defined above in the proof of Proposition~\ref{morecomassone};
that is,~$A_+=D_+ +\frac18 I$ where
$$D_+=\left(
{\begin{array}{cccccccc}
 0 & 0       & 0   & 0        & 0        & 0 & 0        & 0\\
 0 & \frac12 & 0   & 0        & 0        & 0 & 0        & 0\\
 0 & 0       & 0   & 0        & 0        & 0 & 0        & 0\\
 0 & 0       & 0   & -\frac12 & 0        & 0 & 0        & 0\\
 0 & 0       & 0   & 0        & -\frac12 & 0 & 0        & 0\\
 0 & 0       & 0   & 0        & 0        & 0 & 0        & 0\\
 0 & 0       & 0   & 0        & 0        & 0 & -\frac12 &0 \\
 0 & 0       & 0   & 0        & 0        & 0 & 0        & 0
\end{array}}\right).
$$ As in the proof of Proposition~\ref{comassone}
\begin{equation*}
\begin{aligned}
\sup_{g\in \SO(8)} &
\left( \omega_+ , g (e_1\wedge e_2\wedge e_3\wedge e_4) \right)=\\
&=
\sup_{g\in \SO(8)} \left(A_+ , \phi(g)z_1\right)\\
&=\sup_{g\in \SO(8)} \left(A_+ -\frac18 I, \phi(g)z_1\right)\\
&=\sup_{g'=\phi(g)\in \SO(8)} \left(D_+, g' z_1 g^{\prime -1}\right)\\
&\leq \frac12 (\sum_{j=2,4,5,7} \sup_{g'\in \SO(8)} \left(D_j ,
g' z_1 g^{\prime-1}\right)\\
&=2,
\end{aligned}
\end{equation*}
The last equality follows from the fact that the calculation of
\begin{equation*}
\sup_{g'\in \SO(8)} \left(D_1 , g' z_1 g^{\prime -1}\right)=1
\end{equation*}
in the proof of Proposition~\ref{comassone}, applies equally well to
the other ~$D_j$.  The value~$2$ for~$\left(D_+, g' z_1 g^{\prime
-1}\right)$ is actually achieved for the matrix
\begin{equation*}
g'=\left(
{\begin{array}{cccccccc}
 1 & 0 & 0 & 0  & 0 & 0 & 0 & 0\\
 0 & 1 & 0 & 0  & 0 & 0 & 0 & 0\\
 0 & 0 & -1 & 0  & 0 & 0 & 0 & 0\\
 0 & 0 & 0 & 0  & 0 & 0 & 0 & 1\\
 0 & 0 & 0 & 0  & 1 & 0 & 0 & 0\\
 0 & 0 & 0 & 0  & 0 & 1 & 0 & 0\\
 0 & 0 & 0 & 0  & 0 & 0 & 1 & 0 \\
 0 & 0 & 0 & 1  & 0 & 0 & 0 & 0
\end{array}}\right).
\end{equation*}
\end{proof}

A similar argument shows that the form
\noindent
\begin{eqnarray*}
\mu&=&e^{1234}-e^{1256}+e^{1278}+e^{1357}+e^{1467}+e^{1368}+e^{1458}\\
&=&\frac12
\omega_2 +\frac12 \omega_3+\frac12 \eta_1-\frac12 \eta_4
\end{eqnarray*}
has comass~$2$.

Note that, by considering the associated diagonal matrices, and the
action of the symmetric group,~$S_8$, it is clear that the comass~$1$
forms~$\frac12\omega_+= \frac14 \omega_2-\frac14 \omega_4+\frac14
\eta_1+\frac14 \eta_3$ and~$\frac12\mu=\frac14 \omega_2 +\frac14
\omega_3+\frac14 \eta_1-\frac14 \eta_4$ are~$\SO(8)$-conjugate,
respectively, to the convex combinations~$\frac14 \omega_2+\frac14
\eta_1+\frac14 \eta_2+\frac14 \eta_3$ and~$\frac14 \cay +\frac14
\omega_2+\frac14 \omega_3+\frac14 \eta_1$, both of orbit type~$(3,1)$
in the classification of \cite{DHM}.

\section{Acknowledgment}

We are grateful to the anonymous referee for a number of suggestions
that helped improve an earlier version of the manuscript.

\vfill\eject

\end{document}